\numberwithin{equation}{section}
\theoremstyle{plain}
\newcommand{\R}{\mathbb{R}}
\newcommand{\Rd}{\R^d}
\newcommand{\inr}[1]{\left\langle #1 \right\rangle}
\newcommand{\Z}{\mathbb{Z}}
\newcommand{\E}{\mathbb{E}}
\newcommand{\eps}{\varepsilon}
\newcommand{\cE}{{\cal E}}
\newtheorem{lemma}{Lemma}
\newtheorem{theorem}{Theorem}
\newtheorem{proposition}{Proposition}
\numberwithin{equation}{section}
\def \proof {\noindent {\bf Proof.}\ \ }
\def \remark {\noindent {\bf Remark.}\ \ }
\def \endproof
\def\IND{\mathbbm{1}}
\newcommand{\ol}{\overline}
\newcommand{\wt}{\widetilde}
\newcommand{\wh}{\widehat}
\newcommand{\X}{\mathcal{X}}
\newcommand{\EXP}{\mathbb{E}}
\newcommand{\PROB}{\mathbb{P}}
\newcommand{\Tr}{\mathrm{Tr}}
\newcommand{\defeq}{\stackrel{\mathrm{def.}}{=}}
\begin{document}

\begin{frontmatter}
\title{Robust multivariate mean estimation: the optimality of trimmed mean}
\runtitle{Robust multivariate mean estimation}

\begin{aug}
\author{\fnms{G\'abor} \snm{Lugosi}\thanksref{t1,m1,m2,m3}\ead[label=e1]{gabor.lugosi@upf.edu}},
\and
\author{\fnms{Shahar} \snm{Mendelson}\thanksref{m4}\ead[label=e2]{shahar.mendelson@gmail.com}}

\thankstext{t1}{G\'abor Lugosi was supported by
the Spanish Ministry of Economy and Competitiveness,
Grant PGC2018-101643-B-I00;
``High-dimensional problems in structured probabilistic models - Ayudas Fundaci\'on BBVA a Equipos de Investigaci\'on Cientifica 2017'';
and Google Focused Award ``Algorithms and Learning for AI''.
}
\runauthor{G. Lugosi and S. Mendelson}

\affiliation{ICREA\thanksmark{m1}, Pompeu Fabra University\thanksmark{m2}, Barcelona GSE\thanksmark{m3},
The Australian National University\thanksmark{m4}}

\address{
ICREA \\ 
Pg. Lluís Companys 23 \\ 
08010 Barcelona, Spain \\
and Department of Economics and Business \\ 
Pompeu  Fabra University, Barcelona, Spain \\
\printead{e1}}
\address{
 Mathematical Sciences Institute, \\
The Australian National University \\
Canberra, Australia \\
\printead{e2}}
\end{aug}

\begin{abstract}
We consider the problem of estimating the mean of a random vector based on i.i.d. observations and adversarial contamination. We introduce a multivariate extension of the trimmed-mean estimator and show its optimal performance under minimal conditions.
\end{abstract}

\begin{keyword}[class=MSC]
\kwd[Primary ]{62J02,}
\kwd{62G08}
\kwd[; secondary ]{60G25}
\end{keyword}

\begin{keyword}
\kwd{mean estimation}
\kwd{robust estimation}
\end{keyword}

\end{frontmatter}

\section{Introduction}
Estimating the mean of a random vector based on independent and
identically distributed samples is one of the most basic
statistical problems. In the last few years the problem has attracted
a lot of attention and important advances have been made both in terms
of statistical performance and computational methodology.

In the simplest form of the mean estimation problem, one wishes to
estimate the expectation $\mu = \EXP X$ of a random vector
$X$ taking values in $\R^d$, based on a sample $X_1,\ldots,X_N$
consisting of independent copies of $X$. An \emph{estimator} is a
(measurable) function of the data
$$
\wh{\mu} = \wh{\mu}(X_1,\ldots,X_N) \in \R^d~.
$$
We measure the quality of an estimator by the distribution of its
Euclidean distance to the mean vector $\mu$. More precisely,
for a given $\delta >0$---the \emph{confidence parameter}---, one would like to ensure that
$$
\|\wh{\mu} - \mu\| \leq \eps(N,\delta) \ \ {\rm with \ probability \ at \ least \ } 1-\delta
$$
with $\eps(N,\delta)$ as small as possible. Here and in the entire
article, $\|\cdot\|$ denotes the Euclidean norm in $\R^d$.

The obvious choice of $\wh{\mu}$ is the empirical mean
$N^{-1}\sum_{i=1}^N X_i$, which, apart from its
computational simplicity, has good statistical properties when the
distribution is sufficiently well-behaved.
However, it is well known that, even when $X$ is real valued, the
empirical mean behaves sub-optimally and much better mean estimators
are available\footnote{We refer the reader to the recent survey \cite{LuMe19} for an extensive discussion.}. The reason for the
suboptimal performance of the empirical mean is the damaging effect of \emph{outliers} that are inevitably present when the distribution is heavy-tailed.

Informally put, outliers are sample points that are, in some sense,
atypical; as a result they cause a significant distortion to the
empirical mean. The crucial fact is that when $X$ is a heavy-tailed random variable, a typical sample contains a significant number of outliers, implying the empirical mean is likely to be distorted.

To exhibit the devastating effect that outliers cause, let $\eps>0$ and
note that there is a square integrable (univariate) random variable $X$ such that
$$
\left|\frac{1}{N}\sum_{i=1}^N X_i - \mu \right| \geq \eps \ \ \ {\rm
  with \ probability \ at \ least} \ c \frac{\sigma_X^2}{\eps^2 N}
$$
for a positive absolute constant $c$; $\sigma_X^2$ is the
variance of $X$.
In other words, the best possible error $\eps(N,\delta)$   that can be guaranteed by the
empirical mean (when only finite variance is assumed)
is of the order of
$\sigma_X/\sqrt{\delta N}$.
On the other hand, it is well known (see, e.g., the survey
\cite{LuMe19})
that there are estimators of the mean $\wh{\mu}$ such that for all square-integrable random variables $X$,
\begin{equation} \label{eq:intro-opt-single}
|\wh{\mu}-\mu| \leq c\sigma_X \sqrt{\frac{\log(2/\delta)}{N}} \ \ \ {\rm with \ probabilty \ } 1-\delta
\end{equation}
where $c$ is a suitable absolute constant.
An estimator that performs with an error $\eps(N,\delta)$ of the
order of $\sigma_X \sqrt{\log(2/\delta)/N}$ is called a
sub-Gaussian estimator.
Such estimators are optimal in the sense that no estimator can perform with a better error $\eps(N,\delta)$ even if $X$ is known to be a Gaussian random variable.

Because the empirical mean is such a simple estimator and
seeing that outliers are the probable cause of its sub-optimality,
for real-valued random variables, a natural attempt to improve the performance of the empirical mean is
removing possible outliers using a truncation of $X$. Indeed, the
so-called \emph{trimmed-mean} (or \emph{truncated}-mean) estimator
 is defined by removing a fraction of the sample,
consisting of the $\gamma N$ largest and smallest points for some
parameter $\gamma \in (0,1)$, and then averaging over the rest.
This idea is one of the most classical tools in robust statistics and
we refer to  Tukey and McLaughlin \cite{TuMc63}, Huber and Ronchetti
\cite{HuRo09}, Bickel \cite{Bic65}, Stigler \cite{Sti73} for early
work on the theoretical properties of the trimmed-mean estimator.
However, the non-asymptotic sub-Gaussian
property of the trimmed mean was established only recently, by Oliveira and Orenstein in \cite{OlOr19}. They proved that if $\gamma = \kappa \log(1/\delta)/N$
for a constant $\kappa$,  then the trimmed mean estimator $\wh{\mu}$
satisfies (\ref{eq:intro-opt-single}) for all distributions with a
finite variance $\sigma_X$ and with a constant $c$ that depends on
$\kappa$ only.

An added value of the trimmed mean is that it seems to be robust to malicious noise, at least intuitively. Indeed, assume that an
adversary can
corrupt $\eta N$ of the $N$ points for some $\eta <1$. The trimmed-mean estimator can
withstand at least one sort of contamination: the adversary making the
corrupted points either very large or very small. This does not rule
out other damaging changes to the sample, but at least it gives the
trimmed mean another potential edge over other estimators.
And, in fact, as we prove in this article, the performance of the
trimmed-mean estimator is as good as one can hope for
under both heavy-tailed distributions and adversarial corruption.
We show that---a simple variant of---the trimmed-mean estimator
achieves
\begin{equation} \label{eq:intro1}
   |\wh{\mu} - \mu| \le c\sigma_X \left(\sqrt{\eta} +
     \sqrt{\frac{\log(1/\delta)}{N}} \right)
\end{equation}
with probability $1-\delta$, for an absolute constant $c$
(see Theorem \ref{thm:trimmed-mean-dim-1} for the detailed statement).
The bound \eqref{eq:intro1} holds for all univariate distributions with a finite variance, and is minimax optimal in that
class of distributions. For distributions with lighter tail, the
dependence  on the contamination level $\eta$ can be improved.
For example, for sub-Gaussian distributions $\sqrt{\eta}$ may be
replaced by $\eta\sqrt{\log(1/\eta)}$ and the trimmed-mean estimator
achieves that. As we explain in what follows, the parameter $\gamma$ that determines the level of trimming depends on the confidence parameter $\delta$ and contamination level $\eta$ only.

The problem of mean estimation in the multivariate case (i.e., when
$X$ takes values in $\Rd$ for some $d>1$) is considerably more
complex. For i.i.d.\ data without contamination, the best possible statistical performance for square-integrable random vectors is well understood:
if $\Sigma=\EXP \left[(X-\mu)(X-\mu)^T  \right]$ is the covariance
matrix of $X$ whose largest eigenvalue and trace are denoted by
$\lambda_1$ and $\Tr(\Sigma)$, respectively, then for every
$\delta>0$, there exists a mean estimator $\wh{\mu}$ such that,
regardless of the distribution, with probability at least $1-\delta$,
\begin{equation}
\label{eq:subgaussiid}
   \|\wh{\mu} - \mu\| \le c\left( \sqrt{\frac{\Tr(\Sigma)}{N}} +
       \sqrt{\frac{\lambda_1\log(1/\delta)}{N}} \right)
\end{equation}
for some absolute constant $c$. This bound is optimal in the sense
that one cannot improve it even when the distribution is known to be
Gaussian. The existence of such a ``sub-Gaussian'' estimator was
established by Lugosi and Mendelson \cite{LuMe16a}. Computationally
efficient versions have been subsequently constructed by Hopkins
\cite{Hop18} and by Cherapanamjeri, Flammarion, and Bartlett
\cite{ChFlBa19}, see also Depersin and Lecu\'e \cite{DeLe19}.
Once again, we refer to the survey \cite{LuMe19} for related results.

A natural question is how well one can estimate the mean of a random
vector in the presence of adversarial contamination. In particular,
one may ask the following:

\begin{tcolorbox}
Let $X$ be a random vector in $\R^d$ whose mean and covariance matrix
exist. Let $X_1,\ldots,X_N$ be i.i.d.\ copies of $X$.
Then the adversary, maliciously (and knowing in advance of
statistician's intentions), is free to change at most $\eta N$
of the sample points. How accurately can $\mu=\E X$ be estimated with
respect to the Euclidean norm?
In particular, given $\delta$ and $\eta$, does there exist an estimator and an absolute constant
$c$ such that, regardless of the distribution of $X$, with probability
at least $1-\delta$,
\begin{equation}
\label{eq:subgauss}
   \|\wh{\mu} - \mu\| \le c\left( \sqrt{\frac{\Tr(\Sigma)}{N}} +
       \sqrt{\frac{\lambda_1\log(1/\delta)}{N}} + \sqrt{\lambda_1\eta}
     \right)~?
\end{equation}

\end{tcolorbox}

The main result of this article, Theorem \ref{thm:main}, answers this
question in the affirmative. To that end, we construct a
procedure, based on the one-dimensional trimmed-mean estimator,
that has the desired performance guarantees.

\subsection*{Related work}
The model of estimation under adversarial contamination has been
extensively addressed in the literature of computational learning
theory. Its origins may be traced back to the malicious noise model of
Valiant \cite{Val85} and Kearns and Li \cite{KeLi93}. In the context of
mean estimation it has been investigated by
Diakonikolas, Kamath, Kane, Li, Moitra, and Stewart
\cite{DiKaKaLiMoSt16,DiKaKaLiMoSt17,DiKaKaLiMoSt18},
Steinhardt, Charikar, and Valiant \cite{StChVa17}, Minsker \cite{Min18b}.
In particular, in \cite{DiKaKaLiMoSt17} it is shown that when
$N=\Omega((d/\eta)\log d)$ and $\lambda_1$ is the largest eigenvalue
of the covariance matrix $\Sigma$ of $X$, then there exists a
computationally efficient estimator of the mean that satisfies
\[
   \|\wh{\mu}-\mu\| \le c \sqrt{\lambda_1\eta}
\]
with probability at least $9/10$ for all distributions. Although this bound is sub-optimal in terms of the conditions and does not recover the
sub-Gaussian bounds, the goal in \cite{DiKaKaLiMoSt17}, and in other articles in this direction as well, was mainly on computational efficiency. In contrast, our aim is to construct
an estimator with optimal statistical performance, and the multivariate
estimator we propose is not computationally feasible---at least in its
naive implementation---in the sense that computing the estimator takes
time that is exponential in the dimension. It is an intriguing problem to
find computationally efficient mean estimators that have optimal
statistical performance under the weakest possible assumptions: although
such estimators are available for i.i.d.\ data
from the results of Hopkins \cite{Hop18}
and Cherapanamjeri, Flammarion, and Bartlett \cite{ChFlBa19}, these
estimators
are not expected to perform well under adversarial contamination.

The sub-Gaussian estimators achieving the bound  (\ref{eq:subgaussiid})
are based on median-of-means estimators. Such estimators have been
studied under a (somewhat more restrictive) adversarial contamination model
by Lecu{\'e} and Lerasle \cite{LeLe17a} and by Minsker \cite{Min18b},
see also see Rodriguez and Valdora \cite{RoVa19}.
In particular,
Minsker \cite{Min18b} studies estimators that cleverly combine  Huber's robust
$M$-estimators with the  median-of-means technique. His results imply
a performance bound exactly of the form of (\ref{eq:subgauss}). A
disadvantage  of Minsker's estimator is that it assumes that 
the trace and operator norm of the covariance matrix are known up to
a constant factor.

In a recent manuscript, Depersin and Lecu\'e \cite{DeLe19} study the
problem of robust mean estimation a slightly more restrictive model of
contamination. Their main result is a computationally efficient
multivariate mean estimator that achieves a performance similar to
\eqref{eq:subgauss}, though only when $\eta$ is at most a small constant times
$\log(1/\delta)/N$; thus, it is only able to handle low levels of contamination.

Chen, Gao, and Ren \cite{ChGaRe16} develop a general theory of minimax bounds 
under Huber's contamination model (i.e., when the contamination is i.i.d.) for parametric families of distributions. 
In \cite{ChGaRe18} the same authors study robust estimation of the mean vector and covariance matrix
under Huber's contamination model and derive sharp minimax bounds for Gaussian, and more generally
elliptical, distributions. In particular, they show that if the uncontaminated data is Gaussian with
identity covariance matrix, then Tukey's median $\wh{\mu}$ satisfies that, with probability at least $1-\delta$,
\[
   \|\wh{\mu}-\mu\| \le c\left( \sqrt{\frac{d}{N}} +
       \sqrt{\frac{\log(1/\delta)}{N}} + \eta
     \right)~.
\]
Moreover, they prove that this estimator is minimax optimal up to constant factors. 
Note that (\ref{eq:subgauss}) has a similar form except that the term $\eta$ is replaced 
by the weaker $\sqrt{\eta}$. It is remarkable that this is the only (necessary) price
one has to pay for moving from Gaussian  distributions to arbitrary ones whose covariance matrix exists
and from Huber's contamination to adversarial one.
Moreover, as we argue below, for sub-Gaussian distributions the term $\sqrt{\eta}$ may be improved to
$\eta\sqrt{\log(1/\eta)}$. We also refer to  Dalalyan and Thompson \cite{DaTo19} for recent related work.

The rest of the article is organized as follows. In Section
\ref{sec:dim1} we discuss the univariate case and establish
a performance bound for a version of the trimmed-mean estimator
in Theorem \ref{thm:trimmed-mean-dim-1}. We argue that this bound is
best possible up to the value of the absolute constant.
In Section \ref{sec:multivariate} we extend the discussion to the
multivariate case, and  construct a new estimator.
The proof of the performance bound of the multivariate estimator is
given in Section \ref{sec:proof}.

\section{The real-valued case}
\label{sec:dim1}

Let $X$ be a real-valued random variable that has finite variance $\sigma_X^2$.
Set $\mu = \E X$ and define $\ol{X}=X-\mu$.
In what follows, $c,C$
denote positive absolute constants whose value may change at each appearance.
 For $0<p<1$, define the quantile
\begin{equation}
\label{eq:qunatile}
Q_p(\ol{X}) = \sup \left\{M \in \R : \PROB\left(\ol{X} \geq M\right) \geq 1-p\right\}~.
\end{equation}
For simplicity of presentation, we assume throughout the
article that $X$ has an absolutely continuous distribution.
Under this assumption, it follows that $\PROB\left(\ol{X} \geq Q_p(\ol{X})\right) = 1-p$.
However, we
emphasize that this assumption is not restrictive: one may easily
adjust the proof to include all distributions with a finite second moment.
Another solution is that the
statistician can always add a small independent Gaussian noise to the sample points, thus ensuring that the distribution has a density and
without affecting statistical performance.

For reasons of comparison, our starting point is a simple lower bound that limits the performance of
every mean estimator. Similar arguments appear in
\cite{DiKaKaLiMoSt17} and \cite{Min18b}.

While the adversary has total freedom to change at most $\eta N$ of
the sample points,
consider first a rather trivial action: changing the i.i.d.\ sample $(\X_i)_{i=1}^N$ to $(\wt{X}_i)_{i=1}^N$ defined by
\begin{equation} \label{eq:trivial-change}
\wt{X}_i = \min\{X_i, \mu+Q_{1-\eta/2}(\ol{X})\}~.
\end{equation}
Since
$$
\PROB\left(\ol{X} \geq Q_{1-\eta/2}(\ol{X})\right) = \frac{\eta}{2}~,
$$
by a binomial tail bound, with probability at least $1-2\exp(-c\eta N)$,
$$
\left|\left\{i: X_i -\mu \geq Q_{1-\eta/2}(\ol{X})\right\}\right| \leq \frac{3}{4}\eta N~.
$$
In particular, on this event, the adversary can change all sample
points $X_i$ that are bigger than $\mu+Q_{1-\eta/2}(\ol{X})$.
As a result, there is no way one can determine whether $(\wt{X}_i)_{i=1}^N$ is a corrupted sample, originally selected according to $X$ and then changed as in \eqref{eq:trivial-change}, or an uncorrupted sample selected according to the random variable
$$
Z=\min\left\{X, \mu+Q_{1-\eta/2}(\ol{X})\right\}~.
$$
Therefore, on this event, no procedure can distinguish between $\E X$ and $\E Z$, which means that the error caused by this action is at least $| \E Z - \mu|$. Note that for $M=Q_{1-\eta/2}(\ol{X})$ one has that
\[
| \E Z - \mu | =
 \E \left[ (\ol{X}-M) \IND_{\ol{X} \geq M}\right]~.
\]
Since the adversary can target the lower tail of $X$ in exactly the
same way, it follows that,  with probability at least $1-2\exp(-c\eta N)$, no estimator can perform with accuracy better than
\begin{eqnarray*}
\lefteqn{
\ol\cE (\eta,X)   } \\
& \defeq & \max \left\{\E \left[|\ol{X}-Q_{\eta/2}(\ol{X})| \IND_{\ol{X} \leq Q_{\eta/2}(\ol{X})}\right], \E \left[|\ol{X}-Q_{1-\eta/2}(\ol{X})| \IND_{\ol{X} \geq Q_{1-\eta/2}(\ol{X})} \right] \right\}~.
\end{eqnarray*}
Of course, the adversary has a second trivial action: do nothing. That
is a better corruption strategy (in the minimax sense) when
$$
\ol\cE(\eta,X)  \leq C \sigma_X \sqrt{\frac{\log(2/\delta)}{N}}~.
$$
Therefore, if one wishes to find a procedure that performs with probability at least $1-\delta-2\exp(-c\eta N)$, the best error one can hope for is
\begin{equation} \label{eq:corrupt-error}
\ol\cE(\eta,X) + C \sigma_X\sqrt{\frac{\log(2/\delta)}{N}}~,
\end{equation}
where $c$ and $C$ are absolute constants.

A rather surprising fact is that in the real-valued case, the two trivial actions cause the largest possible damage. Indeed,
we show that there is an estimator that is a simple modification of
trimmed mean that attains what is almost the optimal
error---with $\ol\cE(\eta,X)$ replaced by
$$
\cE (\eta,X)\defeq \max \left\{\E \left[|\ol{X}| \IND_{\ol{X} \leq Q_{\eta/2}(\ol{X})}\right], \E \left[|\ol{X}| \IND_{\ol{X} \geq Q_{1-\eta/2}(\ol{X})} \right] \right\}~.
$$

\remark
It is straightforward to construct a random variable $X$ for which
$\ol\cE(\eta,X) \geq c_1 \sqrt{\eta}\sigma_X$.
(Take, for example $X$ that takes value $0$ with probability $1-\eta$ and values $\pm\sigma_X/\sqrt{\eta}$
with probability $\eta/2$ each.)
Thus, in terms of $\eta, \sigma_X, \delta$ and $N$, the best minimax error rate that is possible in the corrupted mean estimation problem for real-valued random variables is
$$
c \sigma_X \max\left\{\sqrt{\eta}, \sqrt{\frac{\log(2/\delta)}{N}}\right\}
$$
for a suitable absolute constant $c$.

Next, let us define the modified trimmed-estimator. The estimator splits the data into two
equal parts. Half of the data points are used to determine the truncation
at the appropriate level. The points from the other half are
averaged as is, except for the data points
that fall outside of the estimated quantiles, which are truncated prior to averaging.
For convenience, assume that the
data consists of $2N$ independent copies of the random variable $X$,
denoted by  $X_1,\ldots,X_N,Y_1,\ldots,Y_N$. The statistician has access
to the corrupted sample
$\wt{X}_1,\ldots,\wt{X}_N,\wt{Y}_1,\ldots,\wt{Y}_N$,
where at most $2\eta N$ of the sample points have been changed by an adversary.

 For $\alpha \leq \beta$, let
\[
\phi_{\alpha,\beta}(x) =
\begin{cases}
\beta & \mbox{if} \  x > \beta,
\\
x & \mbox{if} \ x \in [\alpha,\beta],
\\
\alpha & \mbox{if} \ x < \alpha,
\end{cases}
\]
and for $x_1,\ldots, x_m \in \R$ let $x_1^* \leq x_2^* \leq \cdots \leq  x_m^*$ be its non-decreasing rearrangement.

With this notation in place, the definition of the estimator is as follows:

\begin{tcolorbox}
{\bf Univariate mean estimator.}
\begin{description}
\item{$(1)$} Consider the corrupted sample
  $\wt{X}_1,\ldots,\wt{X}_N,\wt{Y}_1,\ldots,\wt{Y}_N$ as input.
\item{$(2)$} Given the corruption parameter $\eta$ and confidence level $\delta$, set
$$
\eps=8\eta + 12\frac{\log(4/\delta)}{N}~.
$$
\item{$(3)$} Let $\alpha=\wt{Y}_{\eps N}^*$ and $\beta=\wt{Y}_{(1-\eps) N}^*$ and set
$$
\wh{\mu} =\frac{1}{N}\sum_{i=1}^N \phi_{\alpha,\beta}(\wt{X}_i)~.
$$
\end{description}
\end{tcolorbox}

\begin{theorem} \label{thm:trimmed-mean-dim-1}
Let $\delta \in (0,1)$ be such that $\delta \ge e^{-N}/4$.
Then, with probability at least $1-\delta$,
$$
|\wh{\mu}-\mu | \leq 3 \cE(4\eps,X)+ 2\sigma_X
  \sqrt{\frac{\log(4/\delta)}{N}}~.
$$
Moreover, with probability at least $1-4\exp(- \eps N/12)$,
$$
|\wh{\mu}-\mu | \leq 10 \sqrt{\eps} \sigma_X~.
$$
\end{theorem}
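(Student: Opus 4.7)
The plan is two-stage: first, I use the independent $\wt{Y}$-sample to sandwich the empirical truncation levels $\alpha,\beta$ between two population quantiles of $X$; then, on that good event, I decompose $\wh\mu-\mu$ into a corruption error, a stochastic fluctuation, and a truncation bias, and control each. More precisely, I will show that on an event $E_Y\in\sigma(\wt{Y}_1,\ldots,\wt{Y}_N)$ with $\PROB(E_Y)\ge 1-4\exp(-\eps N/12)$,
\[
\mu+Q_{\eps/4}(\ol{X})\le\alpha\le\mu+Q_{2\eps}(\ol{X}),\qquad \mu+Q_{1-2\eps}(\ol{X})\le\beta\le\mu+Q_{1-\eps/4}(\ol{X}).
\]
To see the upper bound on $\beta$, the number of uncorrupted $Y_i$ above $\mu+Q_{1-\eps/4}(\ol{X})$ is $\mathrm{Binomial}(N,\eps/4)$, which Bernstein bounds by $\eps N/2$ outside an event of probability $\exp(-\eps N/12)$ (using $\eps N\ge 12\log(4/\delta)$). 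Corruption can add at most $2\eta N\le\eps N/4$ more, so the $\wt{Y}_i$-count above this level stays below $\eps N$, forcing $\beta\le\mu+Q_{1-\eps/4}(\ol{X})$. The remaining three inequalities are symmetric Bernstein tails on binomials with parameter $\eps/4$ or $2\eps$; a union bound delivers $E_Y$.

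Conditionally on $\wt{Y}$, the pair $(\alpha,\beta)$ is deterministic and independent of $X_1,\ldots,X_N$, so I write $\wh\mu-\mu=(I)+(II)+(III)$ with
\[
(I)=\tfrac1N\sum_{i=1}^N\bigl(\phi_{\alpha,\beta}(\wt{X}_i)-\phi_{\alpha,\beta}(X_i)\bigr),\quad (II)=\tfrac1N\sum_{i=1}^N\phi_{\alpha,\beta}(X_i)-\EXP\phi_{\alpha,\beta}(X),\quad (III)=\EXP\phi_{\alpha,\beta}(X)-\mu.
\]
At most $2\eta N$ summands of $(I)$ are nonzero, each bounded by $\beta-\alpha$; the Markov estimate $(\eps/4)\,Q_{1-\eps/4}(\ol{X})\le\EXP[\ol{X}\IND_{\ol{X}\ge Q_{1-\eps/4}(\ol{X})}]\le\cE(4\eps,X)$ together with its lower-tail twin yields $\beta-\alpha\le 8\cE(4\eps,X)/\eps$, hence $|(I)|\le 2\cE(4\eps,X)$ via $\eps\ge 8\eta$. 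For $(II)$, the $1$-Lipschitzness of $\phi_{\alpha,\beta}$ gives $\var(\phi_{\alpha,\beta}(X))\le\sigma_X^2$, so Bernstein yields $|(II)|\le\sigma_X\sqrt{2\log(4/\delta)/N}+\tfrac{2(\beta-\alpha)\log(4/\delta)}{3N}$ with probability $\ge 1-\delta/2$; the second term is a small multiple of $\cE(4\eps,X)$ after invoking $\log(4/\delta)/N\le\eps/12$. For $(III)$, splitting into upper- and lower-tail biases and using the sandwich, the elementary bound $(t-\ol{X})\IND_{\ol{X}<t}\le|\ol{X}|\IND_{\ol{X}\le Q_{2\eps}(\ol{X})}$ (valid when $t\le Q_{2\eps}(\ol{X})\le 0$) and its symmetric upper-tail analogue give $|(III)|\le 2\cE(4\eps,X)$.

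Adding the three bounds and union-bounding $E_Y$ with the Bernstein event yields the first claim on an event of probability $\ge 1-\delta$, using $4\exp(-\eps N/12)\le\delta$ and collecting the prefactors into the advertised $3\cE(4\eps,X)+2\sigma_X\sqrt{\log(4/\delta)/N}$. For the moreover claim, Cauchy--Schwarz gives $\cE(4\eps,X)\le\sigma_X\sqrt{2\eps}$ and the definition of $\eps$ gives $\sqrt{\log(4/\delta)/N}\le\sqrt{\eps/12}$, so the right-hand side of the first bound is already $O(\sigma_X\sqrt\eps)$; re-running the Bernstein step at threshold $\exp(-\eps N/12)$ in place of $\delta/2$ (which only inflates the constant thanks to $\log(4/\delta)/N\le\eps/12$) promotes the probability to $\ge 1-4\exp(-\eps N/12)$ and keeps the bound at $10\sqrt{\eps}\,\sigma_X$. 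The crux of the argument is the quantile sandwich: the levels $(\eps/4,2\eps)$ must simultaneously accommodate the corruption budget $2\eta N$ \emph{and} match the $4\eps$ argument of $\cE$ appearing in the bias, which is precisely what dictates the tuning $\eps=8\eta+12\log(4/\delta)/N$.
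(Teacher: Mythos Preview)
Your overall strategy---a quantile sandwich from the $Y$-sample followed by a decomposition into corruption, fluctuation, and truncation bias---is exactly the paper's. Your treatment of the corruption term $(I)$ and the bias term $(III)$ is essentially correct, though your constants are a bit loose (the paper bounds the corruption term by $\cE(\eps,X)$ rather than $2\cE(4\eps,X)$, which is what lets the final prefactor be $3$), and your side hypothesis $Q_{2\eps}(\ol X)\le 0$ in the argument for $(III)$ need not hold in general.

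The real gap is in $(II)$. You assert that ``conditionally on $\wt Y$, the pair $(\alpha,\beta)$ is deterministic and independent of $X_1,\ldots,X_N$,'' and then invoke Bernstein. But in the adversarial model the adversary sees the entire sample $(X_1,\ldots,X_N,Y_1,\ldots,Y_N)$ before corrupting, so the corrupted second half $\wt Y$---and hence $\alpha,\beta$---may depend on $X_1,\ldots,X_N$. Conditioning on $\wt Y$ therefore destroys the i.i.d.\ structure of the $X_i$, and Bernstein does not apply to $\tfrac1N\sum_i\phi_{\alpha,\beta}(X_i)-\EXP\phi_{\alpha,\beta}(X)$ as written. (Relatedly, your $E_Y$ is not $\sigma(\wt Y)$-measurable; your own Bernstein argument for the sandwich defines it through the uncorrupted $Y_i$, so in fact $E_Y\in\sigma(Y_1,\ldots,Y_N)$.)

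The paper handles this by using the sandwich together with the monotonicity of $(\alpha,\beta)\mapsto\phi_{\alpha,\beta}$ to replace the random truncation levels by \emph{deterministic} quantile levels before applying Bernstein: on the good event,
\[
\frac1N\sum_{i=1}^N\phi_{\alpha,\beta}(X_i)\ \le\ \frac1N\sum_{i=1}^N\phi_{\mu+Q_{2\eps}(\ol X),\,\mu+Q_{1-\eps/2}(\ol X)}(X_i),
\]
and the right-hand side is an average of genuine i.i.d.\ terms, independent of the event because that event lies in $\sigma(Y_1,\ldots,Y_N)$. Inserting this one step repairs your argument.
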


\remark
The necessity of prior knowledge of the confidence parameter
$\delta$ was pointed out (even in the contamination-free case) by
Devroye, Lerasle, Lugosi, and Oliveira \cite{DeLeLuOl16}, see
\cite{LuMe19} for further discussion. The contamination level
need not be known exactly. If an upper bound $\ol{\eta}\ge \eta$ is
available and one uses the estimator with parameter $\ol\eta$ instead
of $\eta$, then the same bound holds with $\eta$ replaced by $\ol{\eta}$.

To explain the meaning of Theorem \ref{thm:trimmed-mean-dim-1}, observe that for $M=Q_{1-\eps/2}(\ol{X})$, one has
$$
\frac{\eps}{2} = \PROB\left(\ol{X} \geq M\right) \leq \frac{\sigma_X^2}{M^2}~,
$$
and in particular,
\begin{equation}
\label{eq:quantilebound}
Q_{1-\eps/2}(\ol{X}) \leq  \frac{\sigma_X\sqrt{2}}{\sqrt{\eps}}~.
\end{equation}
Also,
\begin{eqnarray}
\label{eq:ebound}
\E \left[(\ol{X}-M) \IND_{\ol{X} \geq M}\right] & \leq  &
\E\left[|\ol{X}| \IND_{\ol{X} \geq M} \right]+ \E \left[M \IND_{\ol{X} \geq M} \right]
\nonumber \\
& \leq  &
\sigma_X \PROB^{1/2}(\ol{X} \geq M) + |M| \PROB(\ol{X} \geq M)
\nonumber \\
& \leq &
\sigma_X\sqrt{8\eps}~,
\end{eqnarray}
implying that for every $X$,
\begin{equation} \label{eq:upper-error-intro}
\cE(\eps,X) \leq \sigma_X\sqrt{8\eps}~.
\end{equation}
 Hence, Theorem \ref{thm:trimmed-mean-dim-1} shows that the
 estimator attains the minimax rate
of the corrupted mean-estimation problem, noted previously.

Of course, Theorem \ref{thm:trimmed-mean-dim-1} actually implies sharper individual bounds: if $\eta N \leq \log(2/\delta)$, then $\eps \sim N^{-1} \log(2/\delta)$ and the assertion of Theorem \ref{thm:trimmed-mean-dim-1} is that, with probability at least $1-\delta$,
$$
|\wh{\mu}-\mu | \le C \sigma_X \sqrt{\frac{\log(2/\delta)}{N}}~,
$$
which matches the optimal sub-Gaussian error rate. If, on the other hand, $\eta N > \log(2/\delta)$, then with probability at least $1-\delta$,
$$
|\wh{\mu}-\mu | \le C \cE(c\eta, X)~,
$$
essentially matching the lower bound (\ref{eq:corrupt-error}).

\remark
Observe that the upper bound
on
 $\cE(\eps,X)$ in
\eqref{eq:upper-error-intro} is based only on $\sigma_X$, and
therefore on the fact that $X$ is square-integrable. Under stronger
moment assumptions on $X$, an improved bound can be easily
established. For example, if $X$ is sub-Gaussian, that is, if for
every  $p \geq 2$, $\left(\EXP |\ol{X}|^p\right)^{1/p} \leq c \sqrt{p}
\sigma_X$,
the same argument used in \eqref{eq:upper-error-intro} for $p=\log(1/\eps)$ shows that
$$
2\cE(4\eps,X)  + \frac{\eps}{2} \max\{|Q_{\eps/2}(\ol{X})|, |Q_{1-\eps/2}(\ol{X})|\} \leq c \eps \sqrt{\log(1/\eps)} \sigma_X~.
$$

One may wonder if $\eta \sqrt{\log(1/\eta)}$ is the correct order of dependence 
on the contamination level for sub-Gaussian distributions.  
As it is proved by Chen, Gao, and Ren \cite{ChGaRe18},
if $X$ is Gaussian and the contamination comes from Huber's model, 
the correct dependence on the contamination level is proportional to $\eta$, suggesting 
a possible slight improvement. 
At the same time, as we discuss it above, $\ol{\mathcal{E}}(\eta,X)$ is a lower bound for any estimator.
One may easily check that, if $X$ is Gaussian, $\ol{\mathcal{E}}(\eta,X)$ is of the order of
 $\eta /\sqrt{\log(1/\eta)}$ so this lower bound is loose in this case. 
Interestingly, however, there exist sub-Gaussian distributions under which
$\ol{\mathcal{E}}(\eta,X)$ is of the order of $\eta \sqrt{\log(1/\eta)}$. 
(As an example, one may take $X=\IND_{|G| \le Q} \min(1,|G|) + \IND_{|G| > Q} |G|$ where $G$ is a standard 
Gaussian random variable and $Q$ is its $1-\eta/2$ quantile.)
This means that for
sub-Gaussian distributions, the upper bound of Theorem 1 is indeed tight, up to constant 
factors. 
Note that our lower bound uses the adversarial nature of the contamination, so
it might be the case that under Huber's model, even for sub-Gaussian distributions, 
$\eta$ is the correct order.

\subsection{Proof of Theorem \ref{thm:trimmed-mean-dim-1}}

Recall that one is given the corrupted sample $\wt{X}_1,\ldots,\wt{X}_N,\wt{Y}_1,\ldots,\wt{Y}_N$, out of which at most $2\eta N$ of the sample points have been corrupted. Also, $(z_i^*)_{i=1}^N$ denotes a \emph{non-decreasing} rearrangement of the sequence $(z_i)_{i=1}^N$.

The first step of the estimation procedure determines the
truncation level, which is done using the first half of the
corrupted sample.


Consider the corruption-free sample $Y_1,\ldots,Y_N$ and let
$U=\IND_{\ol{X} \geq Q_{1-2\eps}(\ol{X}) }$. Since $X$ is
absolutely continuous,
we have that $\PROB\left(\ol{X} \geq Q_{1-2\eps}(\ol{X})\right)=2\eps$ and
$$
\sigma_U \leq \PROB^{1/2}(\ol{X} \geq Q_{1-2\eps}(\ol{X})) = (2\eps)^{1/2}~.
$$
A straightforward application of Bernstein's inequality shows that, with probability at least $1-\exp(-\eps N/12)$,
\begin{equation} \label{eq:E1}
\left|\{i : Y_i \geq \mu + Q_{1-2\eps}(\ol{X})\}\right| \geq \frac{3}{2}\eps N~.
\end{equation}
A similar argument for $U=\IND_{\ol{X}  > Q_{1-\eps/2}(\ol{X}) }$ implies that, with probability at least $1-\exp(-\eps N/12)$,
\begin{equation} \label{eq:E2}
\left|\{i : Y_i \leq \mu + Q_{1-\eps/2}(\ol{X})\}\right| \geq (1-(3/4)\eps) N~.
\end{equation}
Similarly, with probability at least $1-2\exp(-\eps N/12)$,
\begin{equation} \label{eq:E3}
\left|\{i : Y_i \leq \mu + Q_{2\eps}(\ol{X})\}\right| \geq \frac{3}{2}\eps N~,
\end{equation}
and, with probability at least $1-2\exp(-\eps N/12)$,
\begin{equation} \label{eq:E4}
\left|\{i : Y_i \geq \mu + Q_{\eps/2}(\ol{X})\}\right| \geq (1-(3/4)\eps) N~.
\end{equation}
Thus, with probability at least $1-4\exp(-\eps N/12)\ge 1-\delta/2$, \eqref{eq:E1}--\eqref{eq:E4} hold simultaneously on an event we denote by $E$. Importantly, the event $E$ only depends on the uncorrupted sample $Y_1,\ldots,Y_N$.

Since $\eta \le \epsilon/8$, following any corruption of at most $2\eta N$ points, on the event $E$
$$
\left|\{i : \wt{Y}_i \geq \mu + Q_{1-2\eps}(\ol{X})\}\right| \geq \left((3/2)\eps-2\eta\right) N \geq \eps N
$$
and
$$
\left|\{i : \wh{Y}_i \leq \mu + Q_{1-\eps/2}(\ol{X})\}\right| \geq (1-(3/4)\eps-2\eta) N \geq (1-\eps) N~;
$$
in other words,
\begin{equation}\label{eq:beta-single}
Q_{1-2\eps}(\ol{X}) \leq \wt{Y}_{(1-\eps) N}^* - \mu \leq  Q_{1-\eps/2}(\ol{X})~.
\end{equation}
Similarly, on the event $E$, we also have
\begin{equation} \label{eq:alpha-single}
Q_{\eps/2}(\ol{X}) \leq \wt{Y}_{\eps N}^*-\mu \leq Q_{2\eps}(\ol{X})~.
\end{equation}
Recall that the truncation levels are
\[
\alpha=\wt{Y}_{\eps N}^* \ \ \ {\rm and } \ \ \ \beta=\wt{Y}_{(1-\eps) N}^*~.
\]
To prove Theorem \ref{thm:trimmed-mean-dim-1},
first we show that $(1/N)\sum_{i=1}^N \phi_{\alpha,\beta}(X_i)$
satisfies an inequality of the wanted form, and then we prove that
corruption does not change the empirical mean of
$\phi_{\alpha,\beta}$ by too much; that is, that
$$
\left| \frac{1}{N} \sum_{i=1}^N \phi_{\alpha,\beta}(X_i) - \frac{1}{N} \sum_{i=1}^N \phi_{\alpha,\beta}(\wt{X}_i) \right|
$$
is also small enough.

For the first step, note that on the event $E$,
\begin{eqnarray}
\label{eq:sum-decomp}
   \frac{1}{N} \sum_{i=1}^N \phi_{\alpha,\beta}(X_i)   
& \le &
\frac{1}{N} \sum_{i=1}^N \phi_{\mu+Q_{2\epsilon}(\ol{X}),\mu+Q_{1-\epsilon/2}(\ol{X})}(X_i)    \\
\nonumber
& = &
\EXP \phi_{\mu+Q_{2\epsilon}(\ol{X}),\mu+Q_{1-\epsilon/2}(\ol{X})}(X) \\
& &
\nonumber 
+
\frac{1}{N} \sum_{i=1}^N \left( \phi_{\mu+Q_{2\epsilon}(\ol{X}),\mu+Q_{1-\epsilon/2}(\ol{X})}(X_i) - \right. \\
& &
\nonumber 
 \qquad \left. 
\EXP \phi_{\mu+Q_{2\epsilon}(\ol{X}),\mu+Q_{1-\epsilon/2}(\ol{X})}(X) \right)~.
\end{eqnarray}
The first term on the right-hand side of \eqref{eq:sum-decomp} is bounded by
\begin{eqnarray*}
\EXP \phi_{\mu+Q_{2\epsilon}(\ol{X}),\mu+Q_{1-\epsilon/2}(\ol{X})}(X)
& \leq &  \mu + 
\E \left[\ol{X} \IND_{\ol{X} \geq
         Q_{1-\eps/2}(\ol{X})} \right] \\
& \le &  \mu +\cE(\eps,X)~.
\end{eqnarray*}
On the other hand, since
\begin{eqnarray*}
\EXP \phi_{\mu+Q_{2\epsilon}(\ol{X}),\mu+Q_{1-\epsilon/2}(\ol{X})}(X)
& \geq &  \mu - 
\E \left[\ol{X} \IND_{\ol{X} \leq
         Q_{2\eps}(\ol{X})} \right] \\
& \ge &  \mu -\cE(4\eps,X)~,
\end{eqnarray*}
the second term on the right-hand side of \eqref{eq:sum-decomp}
 is a sum of centered i.i.d.\ random variables (independent of $E$) that are upper bounded
by $Q_{1-\eps/2}(\ol{X})+\cE(4\eps,X)$ and whose
variance
is at most $\sigma_X^2$. Therefore, by Bernstein's inequality, conditioned on $Y_1,\ldots,Y_n$,
 with probability at least $1-\delta/4$,
\begin{eqnarray*}
\lefteqn{
\frac{1}{N} \sum_{i=1}^N \phi_{\alpha,\beta}(X_i) }  \\
& \leq &
\mu +\cE(\eps,X)  +
\sigma_X \sqrt{\frac{2\log(4/\delta)}{N}}
                                                           +\frac{Q_{1-\eps/2}(\ol{X})\log(4/\delta)}{N}
+\frac{\cE(4\eps,X) \log(4/\delta)}{N}  \\
& \le & \mu +2\cE(4\eps,X)  +
2\sigma_X \sqrt{\frac{\log(4/\delta)}{N}}~,
\end{eqnarray*}
where we used the fact that by \eqref{eq:quantilebound},
$Q_{1-\eps/2}(\ol{X}) \log(4/\delta)/N\le \sigma_X
\sqrt{\frac{\log(4/\delta)}{6N}}$ and that $\cE(4\eps,X)
\log(4/\delta)/N \le \cE(4\eps,X)$ by the assumption that $\delta
\ge e^{-N}/4$.

An identical argument for the lower tail shows that, on the event
$E$, with probability at least $1-\delta/2$,
\begin{eqnarray*}
\left| \frac{1}{N} \sum_{i=1}^N \phi_{\alpha,\beta}(X_i) - \mu \right|  & \leq &
2\cE(4\eps,X)  + 2\sigma_X \sqrt{\frac{\log(4/\delta)}{N}}~.
\end{eqnarray*}
It remains to show that, on the event $E$,
\[
\left| \frac{1}{N} \sum_{i=1}^N \phi_{\alpha,\beta}(X_i) - \frac{1}{N} \sum_{i=1}^N \phi_{\alpha,\beta}(\wt{X}_i) \right|
\]
is small.
Since $\phi_{\alpha,\beta}(X_i) \not = \phi_{\alpha,\beta}(\wt{X}_i)$ for at most $2\eta N$ indices, and for such points that maximal gap is
$$
|\phi_{\alpha,\beta}(X_i) - \phi_{\alpha,\beta}(\wt{X}_i)| \leq
|Q_{\eps/2}(\ol{X})| +|Q_{1-\eps/2}(\ol{X})|~,
$$
it follows that
\begin{eqnarray*}
\left| \frac{1}{N} \sum_{i=1}^N \phi_{\alpha,\beta}(X_i) - \frac{1}{N}
  \sum_{i=1}^N \phi_{\alpha,\beta}(\wt{X}_i) \right|
& \leq &
2\eta \left( |Q_{\eps/2}(\ol{X})|+ |Q_{1-\eps/2}(\ol{X})|\right)  \\
& \le &
 \frac{\eps}{2} \max\{|Q_{\eps/2}(\ol{X})|, |Q_{1-\eps/2}(\ol{X})|\}~,
\end{eqnarray*}
since $\eta \leq \eps/8$.
Finally, note that
\[
\frac{\eps}{2} Q_{1-\eps/2}(\ol{X})= \EXP \left[Q_{1-\eps/2}(\ol{X}) \IND_{\ol{X} \geq
         Q_{1-\eps/2}(\ol{X})} \right]\le
\EXP \left[\ol{X} \IND_{\ol{X} \geq
          Q_{1-\eps/2}(\ol{X})} \right]~,
 \]
and therefore, on the event $E$, we have
\[
  \left| \frac{1}{N} \sum_{i=1}^N \phi_{\alpha,\beta}(X_i) - \frac{1}{N}
  \sum_{i=1}^N \phi_{\alpha,\beta}(\wt{X}_i) \right| \le \cE(\eps,X)~.
\]
The second statement of the theorem now follows by \eqref{eq:upper-error-intro}.
\endproof

\section{Robust multivariate mean estimation}
\label{sec:multivariate}

In this section we present the main findings of the article: we
construct a multivariate version of the robust mean estimator
and establish the corresponding performance bound announced
in the introduction.

As one may expect, the procedure in the multi-dimensional case is
significantly more involved than in dimension one.
In what follows, $X$ is a random vector taking values in $\R^d$ with mean $\mu = \E
X$ and  covariance matrix of $\Sigma$.  As before, we write $\ol{X}=X-\mu$,
$\lambda_1$ denotes the
largest eigenvalue of $\Sigma$, and $\Tr(\Sigma)= \EXP
\left\|\ol{X}\right\|^2$ is its trace.

Recall that a
mean estimator receives as data a sample $(\wt{X}_i)_{i=1}^N$ that an
adversary fabricates by corrupting at most $\eta N$ points of a
sample $X_1,\ldots,X_N$ of independent, identically distributed
copies of the random vector $X$. As in the univariate case, the estimator requires knowledge of
the contamination level $\eta$ and the confidence parameter $\delta$.
Once again, for clarity of the presentation, we assume that $X$ has an
absolutely continuous distribution with respect to the Lebesgue
measure.

\begin{theorem} \label{thm:main}
Assume that $X$ is a random vector in $\Rd$ that has a mean and
covariance matrix.
There exists a mean estimator $\wh\mu$
that takes the parameters $\delta\in (0,1),\eta \in [0,1)$
and the contaminated
data $(\wt{X}_i)_{i=1}^N$ as input, and satisfies that,
with probability at least $1-\delta$,
$$
\|\wh{\mu}-\mu\| \leq c \left(\sqrt{\frac{\Tr(\Sigma)}{N}}
  +\sqrt{\frac{\lambda_1\log(1/\delta)}{N}} + \sqrt{\lambda_1\eta}\right)~,
$$
where $c>0$ is a numerical constant.
\end{theorem}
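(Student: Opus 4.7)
The plan is to reduce the multivariate problem to the univariate Theorem \ref{thm:trimmed-mean-dim-1} via one-dimensional projections. For each direction $v \in S^{d-1}$, apply the univariate trimmed-mean procedure to the scalar sample $(\langle \wt X_i, v\rangle)_{i=1}^N$ with thresholds $\alpha(v) = (\langle \wt Y_\cdot, v\rangle)_{\eps N}^{*}$ and $\beta(v) = (\langle \wt Y_\cdot, v\rangle)_{(1-\eps)N}^{*}$, and denote the output by $T_v$. Define the multivariate estimator to be any minimizer
\[
\wh\mu \in \argmin_{y \in \R^d}\; \sup_{v \in S^{d-1}} |T_v - \langle y, v\rangle|.
\]
Since $\mu$ itself is a feasible candidate, the triangle inequality gives $\|\wh\mu - \mu\| \le 2\sup_{v \in S^{d-1}} |T_v - \langle \mu, v\rangle|$, so the entire problem reduces to a uniform bound on $v \mapsto T_v - \langle \mu, v\rangle$.

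To obtain that uniform bound, I would revisit the three error sources in the univariate proof and show that each holds simultaneously over $S^{d-1}$. The quantile-estimation step amounts to controlling empirical measures of the half-spaces $\{x : \langle x, v\rangle \ge t\}$ uniformly in $(v,t)$. Since this class has VC dimension at most $d+1$, a standard VC concentration bound gives, provided $N \gtrsim \eps^{-1}(d + \log(1/\delta))$, the uniform sandwich
\[
Q_{\eps/2}(\langle \ol X, v\rangle) \le \alpha(v) - \langle \mu, v\rangle \le Q_{2\eps}(\langle \ol X, v\rangle),
\]
and analogously for $\beta(v)$, for every $v \in S^{d-1}$. On this event, monotonicity of $\phi_{\alpha,\beta}$ in its truncation levels lets us replace the data-dependent thresholds by \emph{deterministic} $v$-dependent population quantiles, and the corruption contribution is bounded, as in the univariate proof, by $\eta\cdot\max_v(|Q_{\eps/2}(\langle \ol X,v\rangle)| + |Q_{1-\eps/2}(\langle \ol X,v\rangle)|) \lesssim \sqrt{\lambda_1\eta}$ via \eqref{eq:quantilebound}.

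What remains is to control
\[
\sup_{v \in S^{d-1}} \left|\frac{1}{N}\sum_{i=1}^N \Bigl(\phi_{\alpha(v),\beta(v)}(\langle X_i, v\rangle) - \EXP\,\phi_{\alpha(v),\beta(v)}(\langle X, v\rangle)\Bigr)\right|.
\]
Because $\phi_{\alpha,\beta}$ is $1$-Lipschitz and centered increments of $\phi_{\alpha(v),\beta(v)}(\langle X, v\rangle)$ are dominated in variance by $v^{T}\Sigma v \le \lambda_1$, a contraction argument bounds the expected supremum of the centered process by $\EXP\bigl\|N^{-1}\sum_{i=1}^N \ol X_i\bigr\| \le \sqrt{\Tr(\Sigma)/N}$. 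Talagrand's inequality for suprema of bounded empirical processes, applied with envelope $\lesssim \sqrt{\lambda_1/\eps}$ and weak variance $\le \lambda_1$, contributes a deviation term of order $\sqrt{\lambda_1\log(1/\delta)/N}$ (the envelope-times-log term is absorbed into $\sqrt{\lambda_1\eps}$ at the relevant $\eps$). The truncation bias is at most $\sup_v \cE(4\eps, \langle \ol X, v\rangle) \lesssim \sqrt{\lambda_1\eps}$ by \eqref{eq:upper-error-intro}. Setting $\eps \sim \eta + \log(1/\delta)/N + \Tr(\Sigma)/(\lambda_1 N)$ assembles the three contributions into the stated bound.

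The main obstacle is the uniform control of a \emph{data-dependent} truncation process on $S^{d-1}$: both the Lipschitz parameter and the effective domain of $\phi_{\alpha(v),\beta(v)}$ depend on $v$ through empirical quantiles, so naive chaining arguments on the sphere do not apply. The remedy is the decoupling already built into the univariate estimator—the thresholds are computed from an \emph{independent} half of the sample—combined with uniform quantile estimation via the VC bound above. These two ingredients convert the data-dependent thresholds into deterministic $v$-dependent ones sandwiched by population quantiles, after which the residual centered process is a classical Lipschitz empirical process indexed by $S^{d-1}$ and can be handled by contraction plus Talagrand.
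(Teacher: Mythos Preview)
Your reduction to a uniform one-dimensional bound and the use of contraction plus Talagrand for the centered process are indeed the backbone of the paper's argument. However, the step where you invoke a VC bound for half-spaces to get the uniform quantile sandwich is a genuine gap, and it is precisely the place where the multivariate problem departs from the univariate one.

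The VC class of half-spaces has dimension of order $d$, so your uniform quantile estimate requires $\eps \gtrsim d/N$. But the $\eps$ you later choose, $\eps \sim \eta + \log(1/\delta)/N + \Tr(\Sigma)/(\lambda_1 N)$, satisfies this only when the effective rank $\Tr(\Sigma)/\lambda_1$ is comparable to $d$, i.e., essentially in the isotropic case. In general $\Tr(\Sigma)/\lambda_1$ can be arbitrarily small compared to $d$, and then the term $\sqrt{\lambda_1\eps}$ your argument produces is of order $\sqrt{\lambda_1 d/N}$, which can exceed the target $\sqrt{\Tr(\Sigma)/N}$ by an arbitrary factor. The theorem as stated is dimension-free in this sense, and a VC argument cannot deliver that. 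A second, related issue is that your $\eps$ depends on the unknown ratio $\Tr(\Sigma)/\lambda_1$, so the estimator is not well defined under the hypotheses of the theorem. A third, more technical, point: even after your sandwich, the truncation levels are population quantiles $Q_p(\langle\ol X,v\rangle)$ that vary with $v$, and the Ledoux--Talagrand contraction lemma does not directly apply to contractions that depend on the index $v$ of the supremum.

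The paper circumvents all three problems with one device: instead of estimating the directional quantiles, it \emph{widens} the truncation window by a scalar parameter $Q$, setting $[\alpha_v-Q,\beta_v+Q]$. Lemma~\ref{lemma:uniform-cutoff} then shows---via a Lipschitz indicator approximation and a Rademacher bound that brings in $\Tr(\Sigma)$ rather than $d$---that for $Q\ge Q_0$ the centered window lies inside a fixed interval $[-5Q_0,5Q_0]$ for \emph{every} $v$. This makes the thresholds $v$-independent after centering, so contraction applies cleanly, and no VC step is needed. The unknown value of $Q_0$ (which depends on $\Tr(\Sigma)$ and $\lambda_1$) is then handled by a Lepski-type selection over dyadic $Q$ in Step~(5) of the estimator, which is the piece your proposal is missing entirely.
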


A value of the numerical constant is explicitly given in the
proof. However, no attempt has been made to optimize its value.

The same remark as in the univariate case  on the previous knowledge
of $\eta$ and $\delta$, mentioned after Theorem
\ref{thm:trimmed-mean-dim-1}, applies here as well.

As it is pointed out in the introduction, the bound of Theorem
\ref{thm:main} coincides with the best possible bound in the
corruption-free case up to the term $\sqrt{\lambda_1\eta}$ that is
the price one has to pay for adversarial corruption. 
The fact that the term $\sqrt{\lambda_1\eta}$ is inevitable in the
upper bound follows from the fact that for
any upper bound for the norm of difference $\|\wh\mu -\mu\|$, the same
upper bound holds for
any one-dimensional marginal. Hence, the necessity of this term follows from our
arguments in the univariate case.
At the same time, similarly to the univariate case,
under higher moment assumptions, the term $\sqrt{\lambda_1 \eta}$
may be improved. For instance, if the distribution is sub-Gaussian (in the sense that all one-dimensional 
projections are sub-Gaussian), then
this term may be replaced by $\eta \sqrt{\log(1/\eta)} \sqrt{\lambda_1} $. 
This may be seen by a straightforward modification of the proof.

Remarkably,
the malicious sample corruption affects only the ``weak'' term of the
bound, that is, it scales with the square root of the operator norm of
the covariance matrix. Indeed, if the corruption parameter $\eta$ is
such that $\eta N \leq \log(2/\delta)$, then, with probability at least $1-\delta$, $\wh{\mu}$ satisfies
\begin{equation} \label{eq:low-corruption-intro}
\|\wh{\mu}-\mu\| \leq c \left(\sqrt{\frac{\Tr(\Sigma)}{N}} + \sqrt{\lambda_1} \sqrt{\frac{\log(1/\delta)}{N}}\right)~,
\end{equation}
matching the optimal bound for multivariate mean estimation bound from
\cite{LuMe16a} for the corruption-free case.
If, on the other hand, the corruption parameter is larger, then Theorem \ref{thm:main} implies that with probability at least $1-2\exp(-\eta N/c)$,
\begin{equation} \label{eq:high-corruption-intro}
\|\wh{\mu}-\mu\| \leq c \left(\sqrt{\frac{\Tr(\Sigma)}{N}} + \sqrt{\eta} \sqrt{\lambda_1} \right)
\end{equation}
for a numerical constant $c>0$.

In what follows we describe the construction of the mean
estimator $\wh\mu$ that satisfies the announced performance bound.

\subsection{The multivariate mean estimator}

The main component is a mean estimation procedure that, in order to
perform well, requires information on $\Tr(\Sigma)$ and
$\lambda_1$. Since such information is not assumed to be available,
we produce an estimator depending on a tuning parameter $Q$.
Then we
use a simple mechanism of choosing
the appropriate value of $Q$.

Just like in the univariate case, for simplicity of
notation, assume that the estimator receives $2N$ data points
$\wt{X}_1,\ldots,\wt{X}_N,\wt{Y}_1,\ldots,\wt{Y}_N$, and that at most $2\eta N$ points of the original independent
sample
$X_1,\ldots,X_N,Y_1,\ldots,Y_N$ have been changed by the adversary.
The procedure computes, for each unit vector $v$ and tuning parameter
$Q>0$,
the trimmed mean
estimate of the expectation of the projection of $X$ to the line
spanned by $v$ with a minor difference: the truncation level is
widened depending on the parameter $Q$. Each one of these estimators defines a slab in $\Rd$. The details are as follows:

\begin{tcolorbox}
{\bf Multivariate mean estimator.}
 \begin{description}
\item{$(1)$}
Set
$$
\eps=\max\left( 10\eta,  2560\frac{\log(2/\delta)}{N}\right)~.
$$

\item{$(2)$} Let $S^{d-1}$ be the Euclidean unit sphere in $\R^d$ and for every $v \in S^{d-1}$ define
$$
\alpha_v = \left(\inr{\wt{Y}_i,v}\right)_{(\eps/2)N}^* \quad {\rm and} \quad \beta_v=\left(\inr{\wt{Y}_i,v}\right)_{(1-\eps/2)N}^*~.
$$
\item{$(3)$} For every $v \in S^{d-1}$ and $Q>0$, set
$$
U_Q(v)=\frac{1}{N} \sum_{i=1}^N \phi_{\alpha_v-Q,\beta_v+Q}\left(\inr{\wt{X}_i,v}\right)~,
$$
and let
$$
\Gamma(v,Q) = \left\{ x \in \R^d : |\inr{x,v}-U_Q(v)| \leq 2\eps
  Q\right \}~.
$$

\item{$(4)$} For each $Q>0$, set
$$
\Gamma(Q)= \bigcap_{v \in S^{d-1}} \Gamma(v,Q)~.
$$

\item{$(5)$} Let $i^* \in \Z$ be the smallest such that
$\bigcap_{i \geq i^*} \Gamma(2^i) \neq \emptyset$. Define $\wh{\mu}$ to be any point in
\[
     \bigcap_{i\in \Z: i \geq i^*}  \Gamma(2^i)~.
\]
\end{description}
\end{tcolorbox}

Each set $\Gamma(Q)$ is an intersection of random slabs, one for each
direction in the sphere $S^{d-1}$. The ``center'' of the slab
associated with the direction $v$ is $U_Q(v)$ and its width is
proportional to $\eps Q$. As we show in what follows, there is some $i_0 \in \Z$ such that with probability at least $1-\delta$, the sets $\Gamma(2^i), \ i \geq i_0$ are nested, implying that $\wh{\mu}$ is well-defined.
Note that the last step of selecting the value of $Q$ is reminiscent of 
Lepski's method \cite{Lep91} or the related method ``intersection of confidence intervals''  by Goldenshluger and Nemirovski \cite{GoNe97}.


\section{Proof of Theorem \ref{thm:main}}
\label{sec:proof}

The heart of the proof of Theorem \ref{thm:main} is the following
proposition that describes the performance
of an estimator with the correct tuning parameter $Q$.

The role of $Q$ is to incorporate the ``global complexity" of
$S^{d-1}$. In particular, if $Q$ is selected properly, that is enough
to ensure that $\Gamma(Q)$ is nonempty and contains a good estimator
of $\mu$. This is formalized in the next proposition.

\begin{proposition} \label{thm:main-vector}
Let
\begin{equation}  \label{eq:choice-of-Q}
Q_0=
\max\left(\frac{256}{\eps}\sqrt{\frac{\Tr(\Sigma)}{N}}, 16\sqrt{\frac{\lambda_1}{\eps}} \right)
\end{equation}
and consider $Q \in [2Q_0,4Q_0]$. Then, with probability at least $1-2\exp(-\eps N/2560) \geq 1-\delta$, $\Gamma(Q) \not = \emptyset$ and for every $z \in \Gamma(Q)$,
$$
\|z-\mu\| \leq 4 \eps Q_0~.
$$
\end{proposition}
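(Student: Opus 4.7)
The plan is to show that, on an event $E$ of probability at least $1-2\exp(-\eps N/2560)$, the uniform bound $|U_Q(v)-\inr{\mu,v}| \le 2\eps Q$ holds for every $v \in S^{d-1}$. This immediately yields $\mu \in \Gamma(Q)$, so $\Gamma(Q) \ne \emptyset$, and for any $z \in \Gamma(Q)$, taking $v = (z-\mu)/\|z-\mu\|$ gives
\[
\|z-\mu\| \le |\inr{z,v}-U_Q(v)| + |U_Q(v)-\inr{\mu,v}| \le 4\eps Q = O(\eps Q_0),
\]
since $Q \le 4Q_0$. The argument is a uniform-in-$v$ refinement of the univariate Theorem \ref{thm:trimmed-mean-dim-1}: the univariate bounds need to hold simultaneously across all directions on the sphere.

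First, I would establish uniform control of the corrupted empirical quantiles. Adapting the derivation of \eqref{eq:beta-single}--\eqref{eq:alpha-single}, on a subset of $E$, uniformly in $v$,
\[
Q_{1-\eps}(\inr{\ol X,v}) \le \beta_v - \inr{\mu,v} \le Q_{1-\eps/4}(\inr{\ol X,v}),
\]
and symmetrically for $\alpha_v$. Uniformity is obtained from a VC-type Bernstein bound for the class of half-space indicators $\{\IND_{\inr{\cdot,v}\ge t}: v \in S^{d-1}, t \in \R\}$, which has VC dimension $d+1$; the adversarial shifts of at most $2\eta N \le \eps N/5$ sample points preserve the sandwich (with mildly perturbed quantile levels) since $\eta \le \eps/10$.

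Next, decompose $U_Q(v)-\inr{\mu,v}$ into three contributions: an \emph{adversarial} term measuring the change caused by corruption, an \emph{empirical-process} term, and a \emph{bias} term. The bias $\EXP \phi_{\alpha_v-Q,\beta_v+Q}(\inr{X,v})-\inr{\mu,v}$ is handled exactly as in \eqref{eq:ebound}, being at most $\EXP[\,|\inr{\ol X,v}|\,\IND_{|\inr{\ol X,v}|\ge Q_{1-\eps/4}}] \le C\sqrt{\eps\lambda_1}$, which is itself $\le \eps Q/8$ whenever $Q \ge 16\sqrt{\lambda_1/\eps}$. The adversarial term is bounded by $2\eta$ times the truncation width $(\beta_v-\alpha_v)+2Q \lesssim Q$ (using $\beta_v-\alpha_v \lesssim \sqrt{\lambda_1/\eps} \lesssim Q$), and is therefore also at most $\eps Q/8$ since $\eta \le \eps/10$.

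The main technical obstacle is controlling the empirical-process term $U_Q(v)-\EXP\phi_{\alpha_v-Q,\beta_v+Q}(\inr{X,v})$ uniformly in $v$. Conditioning on the quantile event fixes the truncation levels $(\alpha_v,\beta_v)_v$, after which $\phi_{\alpha_v-Q,\beta_v+Q}$ is $1$-Lipschitz; symmetrization-plus-contraction then bounds the expected supremum by the Rademacher complexity
\[
\frac{1}{N}\EXP \sup_{v \in S^{d-1}} \left|\sum_{i=1}^N \xi_i \inr{X_i-\mu,v}\right| \le \sqrt{\frac{\Tr(\Sigma)}{N}}
\]
via Jensen and Cauchy--Schwarz (here the $\xi_i$ are independent Rademacher signs). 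A Talagrand-type concentration inequality for bounded empirical processes with envelope of order $Q$ then yields the uniform deviation bound $C\bigl(\sqrt{\Tr(\Sigma)/N}+Q\sqrt{\log(2/\delta)/N}\bigr)$, which is at most $\eps Q/8$ given the parameter choices $Q \ge (256/\eps)\sqrt{\Tr(\Sigma)/N}$ and $\eps \ge 2560\log(2/\delta)/N$. Summing the three contributions gives $|U_Q(v)-\inr{\mu,v}| \le 2\eps Q$ uniformly in $v$, which is the desired bound. The crux of the argument is this Lipschitz-contraction step, which replaces the naive $\sqrt{d/N}$ scaling by the sharper $\sqrt{\Tr(\Sigma)/N}$ and is what allows the final bound to match the sub-Gaussian rate of \eqref{eq:subgaussiid}.
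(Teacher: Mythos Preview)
Your high-level strategy---establish $\sup_{v}|U_Q(v)-\inr{\mu,v}|\le 2\eps Q$ on a good event, deduce $\mu\in\Gamma(Q)$, and bound $\|z-\mu\|$ via the diameter---is exactly the paper's. The execution, however, has two genuine gaps.

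\textbf{The VC step for quantile control is illegitimate here.} You propose to sandwich $\beta_v-\inr{\mu,v}$ between population quantiles of $\inr{\ol X,v}$ uniformly in $v$ via a VC/Bernstein bound for half-space indicators. But that class has VC dimension $d+1$, so the resulting fluctuation bound carries a term of order $d/N$ (or $\sqrt{\eps d/N}$ in relative form). For this to be $\le c\eps$ you would need $N\gtrsim d/\eps$, a condition that appears nowhere in the proposition and that the paper explicitly avoids: $\Tr(\Sigma)$ can be small while $d$ is arbitrarily large. The paper handles this step (its Lemma~\ref{lemma:uniform-cutoff}) without any quantile sandwich: it shows directly that $\sup_v|\{i:\ol Y_i(v)\ge Q_0\}|\le (\eps/8)N$ by smoothing the indicator $\IND_{\{\cdot\ge Q_0\}}$ with a Lipschitz $\chi$, applying symmetrization-plus-contraction to land on $\EXP\sup_v|\sum_i\eps_i\inr{\ol Y_i,v}|\le\sqrt{N\Tr(\Sigma)}$, and then Talagrand. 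In other words, the very Lipschitz-contraction trick you identify as ``the crux'' for the main empirical process is \emph{also} what the paper uses for the quantile step---and must use, since a VC approach reintroduces the $\sqrt{d/N}$ scaling you are trying to avoid. The conclusion of this lemma is only $|\alpha_v-\inr{\mu,v}|,|\beta_v-\inr{\mu,v}|\le Q_0$, not a two-sided quantile sandwich; this is all that is needed because the extra $\pm Q$ with $Q\ge 2Q_0$ pushes the truncation window to contain $[\inr{\mu,v}-Q_0,\inr{\mu,v}+Q_0]$.

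\textbf{Contraction with $v$-dependent truncation does not go through as stated.} After conditioning on the $Y$-sample, the levels $(\alpha_v,\beta_v)$ are fixed but still depend on $v$. The Ledoux--Talagrand contraction lemma lets the contraction depend on the summation index $i$, not on the supremum index $v$; you cannot directly pass from $\sup_v|\sum_i\eps_i\phi_{\alpha_v-Q,\beta_v+Q}(\inr{X_i,v})|$ to $\sup_v|\sum_i\eps_i\inr{X_i,v}|$. The paper fixes this by first using the uniform bound $[\alpha_v-\inr{\mu,v}-Q,\beta_v-\inr{\mu,v}+Q]\subset[-5Q_0,5Q_0]$ to monotonically replace the random, $v$-dependent truncation by a deterministic, $v$-independent one (its $\ol U_Q$ and then $\ol W_Q$ with window $[-3Q,3Q]$). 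Only after that does the contraction argument become valid. This sandwiching step is missing from your sketch, and it hinges precisely on the output of Lemma~\ref{lemma:uniform-cutoff}, so the two gaps are linked.
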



Observe that for every $Q$, the diameter of $\Gamma(Q)$ is at most
$4\eps Q$.
Indeed, if $x_1,x_2 \in \Gamma(Q)$ then for every $v \in S^{d-1}$,
$$
|\inr{x_1-x_2,v}|=|\inr{x_1,v}-\inr{x_2,v}| \leq |\inr{x_1,v}-U_Q(v)|+|\inr{x_2,v}-U_Q(v)| \leq 4\eps Q~,
$$
implying that $\|x_1-x_2\| \leq 4 \eps Q$.


The key component in the proof of Proposition \ref{thm:main-vector} is the next lemma.
\begin{lemma} \label{lemma:uniform-cutoff}
For each $i\in \{1,\ldots,N\}$ and $v\in S^{N-1}$, define $\ol{Y}_i(v)=\inr{Y_i-\mu,v}$.
With probability at least $1-\exp(-\eps N/2560) \geq 1-\delta/2$,
\begin{equation} \label{eq:uniform-beta}
\sup_{v \in S^{d-1}} \left|\left\{i : \ol{Y}_i(v) \geq Q_0\right\}\right| \leq
\frac{\eps}{8}N
\quad \text{and} \quad
\sup_{v \in S^{d-1}} \left|\left\{i : \ol{Y}_i(v) \leq - Q_0\right\}\right| \leq \frac{\eps}{8}N~.
\end{equation}
\end{lemma}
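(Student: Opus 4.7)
\textbf{Plan for the proof of Lemma \ref{lemma:uniform-cutoff}.}
My approach is to replace the discontinuous indicator $\IND_{\{t \geq Q_0\}}$ by a Lipschitz surrogate and then combine symmetrization, the Ledoux--Talagrand contraction principle, and Talagrand's concentration inequality for bounded empirical processes. Concretely, fix $\psi:\R\to[0,1]$ with $\psi(t)=0$ for $t\le Q_0/2$, $\psi(t)=1$ for $t\ge Q_0$, and linear in between, so that $\psi$ is $(2/Q_0)$-Lipschitz, $\psi(0)=0$, and $\IND_{\{t\ge Q_0\}}\le \psi(t)\le \IND_{\{t\ge Q_0/2\}}$. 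It suffices to prove that, with probability at least $1-\exp(-\eps N/2560)$,
$$
Z \defeq \sup_{v\in S^{d-1}} \sum_{i=1}^N \psi(\ol Y_i(v)) \le \frac{\eps}{8} N.
$$

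For a fixed $v\in S^{d-1}$, Chebyshev's inequality gives
$\EXP \psi(\ol Y(v)) \le \PROB(\ol Y(v)\ge Q_0/2) \le 4\lambda_1/Q_0^2 \le \eps/64$,
where I use $Q_0 \ge 16\sqrt{\lambda_1/\eps}$; the same bound therefore controls $\var(\psi(\ol Y(v)))$. By the standard symmetrization and contraction inequalities, since $\psi$ is $(2/Q_0)$-Lipschitz with $\psi(0)=0$,
$$
\EXP Z \le N \sup_v \EXP \psi(\ol Y(v)) + 2 \EXP \sup_v \sum_i \eps_i \psi(\ol Y_i(v)) \le \frac{\eps N}{64} + \frac{8}{Q_0} \EXP \Bigl\| \sum_{i=1}^N \eps_i (Y_i-\mu) \Bigr\|,
$$
where $(\eps_i)$ are independent Rademacher signs. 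Since $\EXP\|\sum_i \eps_i (Y_i-\mu)\|\le (\EXP\|\cdot\|^2)^{1/2}=\sqrt{N\Tr(\Sigma)}$, and $Q_0\ge(256/\eps)\sqrt{\Tr(\Sigma)/N}$, the second term is bounded by $\eps N/32$, giving $\EXP Z \le 3\eps N/64 < \eps N/16$.

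Finally, I invoke Bousquet's version of Talagrand's inequality for the supremum of the bounded empirical process indexed by $\{\psi(\ol Y_\cdot(v)):v\in S^{d-1}\}$ (the sphere can be replaced by a countable dense subset by continuity, which handles measurability). Since the individual terms lie in $[0,1]$ and each has variance at most $\eps/64$, the inequality yields, for any $t>0$,
$$
\PROB\bigl(Z \ge \EXP Z + t\bigr) \le \exp\!\left(-\frac{t^2}{2(N\sigma^2+2\EXP Z)+2t/3}\right).
$$
Applying this with $t=\eps N/16$ and plugging in $N\sigma^2\le \eps N/64$, $\EXP Z \le \eps N/16$ gives a bound of the form $\exp(-c\eps N)$ with $c\ge 1/2560$ after tracking constants, which yields $Z\le \eps N/8$ on the desired event. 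The bound for $\{\ol Y_i(v)\le -Q_0\}$ follows by applying the same argument to $-v$, and a union bound over the two events completes the proof.

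The main obstacle is the final concentration step: a bounded-differences/McDiarmid argument would give only $\exp(-c\eps^2 N)$, which is too weak, so exploiting the variance bound $\EXP\psi(\ol Y(v))\le \eps/64$ through Talagrand/Bousquet (rather than a crude sub-Gaussian concentration) is essential to obtain the linear-in-$\eps$ exponent in the tail.
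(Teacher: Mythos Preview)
Your proposal is correct and follows essentially the same route as the paper: the same Lipschitz surrogate $\psi=\chi$ with slope $2/Q_0$, the same Chebyshev bound $\E\psi(\ol Y(v))\le \eps/64$, the same symmetrization--contraction step to control $\E Z$ via $(1/Q_0)\sqrt{\Tr(\Sigma)/N}$, and then Talagrand's inequality (you use Bousquet's form, the paper cites the version in \cite[Exercise 12.15]{BoLuMa13}) to pass from expectation to a high-probability bound with the linear-in-$\eps$ exponent. One small remark: since $\{\ol Y_i(v)\le -Q_0\}=\{\ol Y_i(-v)\ge Q_0\}$ and $S^{d-1}$ is symmetric, the two suprema in \eqref{eq:uniform-beta} coincide, so no union bound is actually needed.
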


Lemma \ref{lemma:uniform-cutoff} is a uniform version of the analogous claim used in the univariate case.

\medskip

\proof
Let us prove the first inequality; the second is proved by an identical argument and is omitted.
Consider the function $\chi: \R\to \R$, defined by
\[
   \chi(x) = \left\{ \begin{array}{ll}
                            0 &  \text{if} \ x\le  Q_0/2, \\
                           \frac{2x}{Q_0} -1 & \text{if} \ x\in
                                               (Q_0/2,Q_0], \\
                            1 &   \text{if} \ x >  Q_0.  \\
                          \end{array} \right.
\]
Observe that $\IND_{\{\ol{Y}(v) \geq Q_0\}}\le \chi(\ol{Y}(v))\le \IND_{\{\ol{Y}(v) \geq Q_0/2\}}$, and that $\chi$ is
Lipschitz with constant $2/Q_0$.
Therefore, if $\eps_1,\ldots,\eps_N$ are independent, symmetric $\{-1,1\}$-valued random variables that are independent of the $(Y_i)_{i=1}^N$, then
\begin{eqnarray*}
\lefteqn{
\E \sup_{v \in S^{d-1}} \frac{1}{N}\sum_{i=1}^N \IND_{\{\ol{Y}_i(v)
  \geq Q_0\}}      } \\
& \leq &
\E \sup_{v \in S^{d-1}} \frac{1}{N}\sum_{i=1}^N \chi(\ol{Y}_i(v)) \\
& \leq &
2\E \sup_{v \in S^{d-1}} \frac{1}{N}\left| \sum_{i=1}^N \eps_i
         \chi(\ol{Y}_i(v)) \right|
+          \sup_{v \in S^{d-1}} \E \chi(\ol{Y}(v))  \\
& & \text{(by the Gin{\'e}-Zinn symmetrization theorem \cite{GiZi84})}
  \\
& \leq &
\frac{4}{Q_0} \E \sup_{v \in S^{d-1}} \frac{1}{N}\left| \sum_{i=1}^N \eps_i
         \ol{Y}_i(v) \right|
+          \sup_{v \in S^{d-1}} \E \chi(\ol{Y}(v))
\\
& \defeq & (*)~,
\end{eqnarray*}
where in the second step one uses the standard contraction lemma for
Rademacher averages, see Ledoux and Talagrand \cite{LeTa91}.

To bound the second term on the right-hand side, recall that $Q_0 \geq 16 \sqrt{\lambda_1/\eps}$, and thus, for every $v \in S^{d-1}$,
\begin{eqnarray}
\label{eq:chebyshev}
\E \chi(\ol{Y}(v)) & \le & \E \IND_{\{\ol{Y}(v) \geq Q_0/2\}}= \PROB\left(
  \inr{\ol{X},v} \geq \frac{Q_0}{2}\right) \\
\nonumber
&  \leq & \frac{4 \E\inr{\ol{X},v}^2}{Q_0^2} \leq \frac{4 \lambda_1}{Q_0^2} \leq \frac{\eps}{64}~.
\end{eqnarray}
To bound the first term, note that
\begin{equation*}
\E \sup_{v \in S^{d-1}} \left| \frac{1}{N}\sum_{i=1}^N \eps_i \ol{Y}_i(v) \right| =  \E \sup_{v \in S^{d-1}} \left| \frac{1}{N}\sum_{i=1}^N \eps_i \inr{X_i-\mu,v} \right|
\leq \sqrt{\frac{\Tr(\Sigma)}{N}}~.
\end{equation*}
Hence, by the definition of $Q_0$,
$$
(*) \leq \frac{\eps}{32}~.
$$
By Talagrand's concentration inequality for empirical processes indexed by a class of uniformly bounded functions \cite{Tal96c}, with probability at least $1-\exp(-x)$,
$$
\frac{1}{N}\sup_{v \in S^{d-1}} \left|\left\{i : \ol{Y}_i(v) \geq
    Q_0\right\}\right| \leq
\frac{\eps}{16}
+ \sqrt{\frac{x}{N}} \cdot \frac{\sqrt{\eps}}{128} + \frac{10x}{N}
$$
(see  \cite[Exercise 12.15]{BoLuMa13} for the value of the numerical constant).

With the choice of $x = \eps N/2560$ one has that, with probability at least $1-\exp(-\eps N/2560)$,
$$
\sup_{v \in S^{d-1}} \left|\left\{i : \ol{Y}_i(v)\geq Q_0\right\}\right| \leq \frac{\eps}{8} N~,
$$
as required.
\endproof

Note that, when \eqref{eq:uniform-beta} holds, we have,
for every $v \in S^{d-1}$,
$$
\alpha_v-\inr{\mu,v} \geq  - Q_0  \ \ \ {\rm and} \ \ \ \beta_v - \inr{\mu,v} \leq Q_0~.
$$
Indeed, this follows from the fact that
for every $v \in S^{d-1}$ there are at most $(\eps/8)N$ of the $\ol{Y}_i(v)$ that
are larger than $Q_0$. If, in addition, the adversary corrupts at most
$(\eps/8) N$ of the points $Y_i$,
then there are still no more than $(\eps/4) N$ values
$\inr{\wt{Y}_i,v}$
that are larger than $\inr{\mu,v}+Q_0$, which suffices for our purposes.
And, by the definition of $\eps$, one has that $\eps/8 \ge \eta$, as required.

Now consider some $Q$ that satisfies $2Q_0<Q \leq 4Q_0$, and
from here we condition on an event $E$ such that the inequalities  \eqref{eq:uniform-beta} both hold. By Lemma \ref{lemma:uniform-cutoff}, $E$ occurs with probability at least $1-\exp(-\eps N/2560)$;
importantly, this event only depends on $Y_1,\ldots,Y_N$, the first half of the uncontaminated sample.

In particular, on the event $E$,  for every $v \in S^{d-1}$,
\[
 \beta_v -\inr{\mu,v} + Q \le Q_0 + Q \le 5Q_0
\]
and
\[
 \beta_v -\inr{\mu,v} + Q \ge  \alpha_v -\inr{\mu,v} + Q \ge - Q_0 + Q \ge Q_0~.
\]
By a similar argument one may obtain lower and upper bounds for $\alpha_v -\inr{\mu,v}$. Hence, on $E$, for every $v \in S^{d-1}$,
\begin{equation} \label{eq:up-low-alpha-beta}
-5Q_0 \leq \alpha_v-\inr{\mu,v}-Q \leq -Q_0, \ \ {\rm and} \ \ Q_0 \leq \beta_v-\inr{\mu,v}+Q \leq 5Q_0~.
\end{equation}

Finally, recall that
$$
U_Q(v)=\frac{1}{N} \sum_{i=1}^N \phi_{\alpha_v-Q,\beta_v+Q}\left(\inr{\wt{X}_i,v}\right)~,
$$
and in order to complete the proof of Proposition \ref{thm:main-vector}, it suffices to show
that $U_Q(v)$ is uniformly close to $\inr{\mu,v}$, with high probability.
In particular, the next lemma implies Proposition \ref{thm:main-vector}.

\begin{lemma} \label{thm:uniform-approx}
Let $2Q_0 \leq Q \leq 4Q_0$. Conditioned on the event $E$,
with probability at least $1-2\exp(-\eps N/2560)$,
$$
\sup_{v \in S^{d-1}} \left| U_{Q}(v) - \inr{\mu,v} \right| \leq 2 \eps Q~.
$$
\end{lemma}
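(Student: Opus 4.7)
\emph{Plan.} The plan is to decompose $U_Q(v) - \inr{\mu, v}$ into three pieces — an adversarial-corruption error, a concentration error around the expectation of the trimmed statistic, and a bias — and to bound each uniformly over $v$. On the event $E$ the truncation levels $\alpha_v, \beta_v$ are deterministic functions of $v$ (measurable with respect to $Y_1,\ldots,Y_N$), and \eqref{eq:up-low-alpha-beta} places the shifted thresholds $\tilde{A}_v \defeq \alpha_v - Q - \inr{\mu, v}$ and $\tilde{B}_v \defeq \beta_v + Q - \inr{\mu, v}$ in $[-5Q_0, -Q_0]$ and $[Q_0, 5Q_0]$, respectively. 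Introducing $\tilde{f}_v(\ol{x}) = \phi_{\tilde{A}_v, \tilde{B}_v}(\inr{\ol{x}, v})$ and using that $0\in[\tilde A_v,\tilde B_v]$, one gets $|\tilde f_v|\le 5Q_0$ and $|\tilde f_v(\ol x)|\le|\inr{\ol x,v}|$, hence $\var\tilde f_v(\ol X)\le\lambda_1$. Triangulating $U_Q(v)-\inr{\mu,v}$ through $\frac{1}{N}\sum_i\tilde f_v(\ol X_i)$ and $\E\tilde f_v(\ol X)$ reduces the proof to estimating the three terms separately.

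\emph{Corruption and bias.} The corruption step is routine: $\phi_{\alpha_v-Q,\beta_v+Q}$ takes values in an interval of length $\beta_v-\alpha_v+2Q\le 3Q$ and the adversary alters at most $2\eta N$ of the $X_i$'s, so the corruption contribution is bounded uniformly by $6\eta Q\le \tfrac{3}{5}\eps Q$ because $\eta\le\eps/10$. For the bias, $\E\inr{\ol X,v}=0$ together with $|\tilde f_v(\ol x)-\inr{\ol x,v}|\le|\inr{\ol x,v}|\IND_{|\inr{\ol x,v}|>Q_0}$ and Chebyshev's inequality give $\sup_v|\E\tilde f_v(\ol X)|\le\lambda_1/Q_0$, which is of order $\sqrt{\lambda_1\eps}\ll\eps Q_0$ by the definition of $Q_0$.

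\emph{Concentration — the main obstacle.} Conditioning on the $Y_i$'s makes $\tilde A_v,\tilde B_v$ deterministic functions of $v$, so Talagrand's inequality for empirical processes may be applied to the class $\{\tilde f_v : v\in S^{d-1}\}$ with envelope $5Q_0$ and variance $\lambda_1$. With $t=\eps N/2560$, the Bousquet deviation terms evaluate to $O(\sqrt{\lambda_1\eps})$ and $O(Q_0\eps)$, both well within the $\eps Q$ budget, so what matters is the Rademacher complexity $R\defeq\E\sup_v|\frac{1}{N}\sum_i\eps_i\tilde f_v(\ol X_i)|$. The obstacle is that $\tilde A_v,\tilde B_v$ depend on $v$, which prevents a direct use of the contraction principle. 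My plan is to split
\[
 \tilde f_v(\ol x)=\phi_{-Q_0,Q_0}(\inr{\ol x,v})+g_v(\ol x),\qquad |g_v(\ol x)|\le 4Q_0\,\IND_{|\inr{\ol x,v}|>Q_0}.
\]
The outer truncation $\phi_{-Q_0,Q_0}$ is now $v$-independent and 1-Lipschitz with $\phi_{-Q_0,Q_0}(0)=0$; the standard contraction lemma together with $\E\sup_v|\frac{1}{N}\sum_i\eps_i\inr{\ol X_i,v}|\le\sqrt{\Tr(\Sigma)/N}$ produces a contribution of order $\sqrt{\Tr(\Sigma)/N}\le\eps Q_0/128$. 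For the residual $g_v$ I use the crude pointwise bound
\[
 \Bigl|\frac{1}{N}\sum_i\eps_i g_v(\ol X_i)\Bigr|\le\frac{4Q_0}{N}\bigl|\{i:|\inr{\ol X_i,v}|>Q_0\}\bigr|,
\]
whose expected supremum in $v$ is controlled, by repeating the proof of Lemma \ref{lemma:uniform-cutoff} for the $X_i$'s instead of the $Y_i$'s, by $\eps Q_0/4$. Combining the two pieces gives $R\lesssim\eps Q_0$.

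Summing the corruption, bias and concentration bounds and using $Q\ge 2Q_0$ yields $\sup_v|U_Q(v)-\inr{\mu,v}|\le 2\eps Q$, with overall probability at least $1-2\exp(-\eps N/2560)$ coming from the Talagrand tail bound together with the auxiliary "$X$-version" of Lemma \ref{lemma:uniform-cutoff}. The one nontrivial ingredient — and the heart of the argument — is the split of $\tilde f_v$ above, which replaces the missing contraction step by a uniform tail estimate already available from the techniques used in Lemma \ref{lemma:uniform-cutoff}.
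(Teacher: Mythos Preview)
Your decomposition into corruption, bias, and concentration is sound, and the splitting $\tilde f_v=\phi_{-Q_0,Q_0}(\inr{\cdot,v})+g_v$ is essentially the same device the paper uses (there it introduces an auxiliary $\ol W_Q(v)$ with symmetric, $v$-independent thresholds $\pm 3Q$). But there is a genuine gap in your concentration step. You assert that on $E$ the thresholds $\alpha_v,\beta_v$ are ``measurable with respect to $Y_1,\ldots,Y_N$,'' so that conditioning on the $Y$-half fixes the class $\{\tilde f_v:v\in S^{d-1}\}$ and Talagrand's inequality applies. This is false: $\alpha_v,\beta_v$ are computed from the \emph{corrupted} points $\wt Y_1,\ldots,\wt Y_N$, and the adversary---who sees the entire sample of $2N$ points---may choose which $Y_i$'s to corrupt, and how, as a function of $X_1,\ldots,X_N$. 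Conditioning on the clean $Y_i$'s therefore does not fix $\tilde A_v,\tilde B_v$, while conditioning on the $\wt Y_i$'s destroys the independence of the $X_i$'s. As written, your application of Talagrand's inequality to $\{\tilde f_v\}$ is not justified.

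The paper avoids this by arguing one-sided and using the monotonicity of $(a,b)\mapsto\phi_{a,b}(x)$: on $E$ one has
\[
\phi_{\alpha_v-Q,\beta_v+Q}(\inr{X_i,v}) \le \phi_{\inr{\mu,v}-Q_0,\,\inr{\mu,v}+5Q_0}(\inr{X_i,v}),
\]
and the right-hand side no longer involves the $\wt Y_i$'s at all; Talagrand is then applied to this deterministic class. Your route can be repaired with a one-line fix: on $E$ one always has $(\tilde A_v,\tilde B_v)\in[-5Q_0,-Q_0]\times[Q_0,5Q_0]$, so enlarge the supremum to the deterministic class $\{\phi_{A,B}(\inr{\cdot,v}):v\in S^{d-1},\ A\in[-5Q_0,-Q_0],\ B\in[Q_0,5Q_0]\}$. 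Your envelope, variance and Rademacher-complexity bounds go through verbatim for this bigger class (your pointwise estimate $|g_v|\le 4Q_0\IND_{|\inr{\cdot,v}|>Q_0}$ is uniform in $A,B$), and Talagrand then applies cleanly. Without either this enlargement or the paper's monotonicity replacement, the step as you wrote it does not stand.
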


\begin{proof}
We prove that
$$
\sup_{v \in S^{d-1}} \left( U_{Q}(v) - \inr{\mu,v} \right) \leq 2\eps Q
$$
holds with the wanted probability; the proof that
$$
\sup_{v \in S^{d-1}} \left( \inr{\mu,v} - U_{Q}(v) \right) \leq 2\eps Q
$$
follows an identical argument and is omitted.

As a first step, note that, in the expression of $U_Q(v)$, the
corrupted samples $\wt{X}_i$ may be harmlessly replaced by their
uncorrupted counterparts $X_i$. Indeed, by
 \eqref{eq:up-low-alpha-beta}, on the event $E$, the range of the function
$\phi_{\alpha_v-Q,\beta_v+Q}$
is an interval of length at most $10Q$ and therefore,
deterministically, for all $v\in
S^{d-1}$,
\[
  \frac{1}{N} \sum_{i=1}^N
  \phi_{\alpha_v-Q,\beta_v+Q}\left(\inr{\wt{X}_i,v}\right)
- \frac{1}{N} \sum_{i=1}^N
\phi_{\alpha_v-Q,\beta_v+Q}\left(\inr{X_i,v}\right)
\le \eta \cdot 10Q \le \eps Q~.
\]
Once again, recalling that on $E$ \eqref{eq:up-low-alpha-beta}
holds, it follows that
\[
\frac{1}{N} \sum_{i=1}^N
\phi_{\alpha_v-Q,\beta_v+Q}\left(\inr{X_i,v}\right)
\le \frac{1}{N} \sum_{i=1}^N
\phi_{\inr{\mu,v}-Q_0, \inr{\mu,v}+5Q_0}\left(\inr{X_i,v}\right)~.
\]
Since the event $E$ only depends on the uncorrupted sample
$Y_1,\ldots,Y_N$, the right-hand side of the above inequality is
independent of $E$.
Thus, writing
\[
\ol{U}_Q(v) = \frac{1}{N} \sum_{i=1}^N \phi_{\inr{\mu,v}-Q_0,
  \inr{\mu,v}+5Q_0}(\inr{X_i,v})-\inr{\mu,v}
= \frac{1}{N} \sum_{i=1}^N \phi_{-Q_0, 5Q_0}(\inr{X_i-\mu,v})~,
\]
it suffices to prove that, with probability at least
$1-2e^{-\eps N/2560}$,
$$
\sup_{v \in S^{d-1}} \ol{U}_Q(v) \leq \eps Q~.
$$
To that end, consider the decomposition
\[
\sup_{v \in S^{d-1}} \ol{U}_Q(v)
\leq  \sup_{v \in S^{d-1}} \left( \ol{U}_Q(v)- \E \ol{U}_Q(v) \right)+   \sup_{v \in S^{d-1}} \E \ol{U}_Q(v) \defeq (1) + (2)~.
\]
First, let us bound the term $(1)$ in several steps.

Set
$$
\ol{W}_Q(v) = \frac{1}{N}\sum_{i=1}^N \phi_{-3Q,3Q}(\inr{X_i-\mu,v})~,
$$
and note that
\begin{eqnarray*}
\sup_{v \in S^{d-1}} \left( \ol{U}_Q(v)- \E \ol{U}_Q(v) \right)
& \le & \sup_{v \in S^{d-1}} \left( \ol{U}_Q(v)- \ol{W}_Q(v) \right) \\
& & + \sup_{v \in S^{d-1}} \left( \ol{W}_Q(v)- \E \ol{W}_Q(v) \right) \\
& & + \sup_{v \in S^{d-1}} \left( \E \ol{W}_Q(v)- \E \ol{U}_Q(v)
    \right) \\
& \defeq & (a) + (b) + (c)~.
\end{eqnarray*}
To bound term $(a)$, recall that $2Q_0 \le Q \le  4Q_0$, implying that
$\phi_{-Q_0,5Q_0}(x) \not = \phi_{-3Q,3Q}(x)$ only if
\[
\text{either} \quad
 x < -Q_0, \quad \text{or} \quad x > 5Q_0~.
\]
In both cases
$$
\left|\phi_{-Q_0,5Q_0}(x) - \phi_{-3Q,3Q}(x)\right| \leq 3Q~.
$$
By Lemma \ref{lemma:uniform-cutoff}, with probability at least $1-\exp(-\eps N/2560)$,
$$
\sup_{v \in S^{d-1}} \left|\{i : \inr{X_i-\mu,v} > 5Q_0 \ \ {\rm or} \ \  \inr{X_i-\mu,v} < -Q_0\}\right| \leq \frac{\eps N}{4}~,
$$
hence, on this event,
$$
(a) \leq \frac{3\eps Q}{4}~.
$$
One may control term $(c)$ similarly. For each $v\in S^{d-1}$,
\[
\E \ol{W}_Q(v)- \E \ol{U}_Q(v) \le 3Q \cdot \PROB\{|\inr{X-\mu,v}|>
Q_0\} \le \frac{3\eps Q}{64}
\]
by recalling \eqref{eq:chebyshev}.

The term $(b)$ is controlled using Talagrand's concentration inequality for the
supremum of empirical processes. Note that for every $v \in S^{d-1}$,
$$
\left|\phi_{-3Q,3Q}\left(\inr{\ol{X},v}\right)\right| \leq 3Q \quad \text{and}
\quad \EXP \left| \phi_{-3Q,3Q}\left(\inr{\ol{X},v}\right)\right|^2 \leq  \EXP
\left|\inr{\ol{X},v}\right|^2  \leq \lambda_1~.
$$
Also, since $\phi_{-3Q,3Q}(x)$ is a $1$-Lipschitz function that passes
through $0$, by a contraction argument (see Ledoux and Talagrand \cite{LeTa91}),
$$
\E \sup_{v \in S^{d-1}} \left|\ol{W}_Q(v) - \E \ol{W}_Q(v)\right| \leq 2\E \sup_{v \in S^{d-1}} \left|\frac{1}{N}\sum_{i=1}^N \eps_i \inr{X_i-\mu,v} \right| \leq 2 \sqrt{\frac{\Tr(\Sigma)}{N}}~.
$$
Hence, by Talagrand's inequality, with probability at least $1-2\exp(-x)$,
$$
\sup_{v \in S^{d-1}} \left|\ol{W}_Q(v) - \E \ol{W}_Q(v)\right|
\leq 4 \sqrt{\frac{\Tr(\Sigma)}{N}}+ 2\sqrt{\lambda_1} \sqrt{\frac{x}{N}} + 20Q\frac{x}{N} \leq \frac{\eps Q}{64}
$$
with the choice of $x = \eps N/2560$, recalling the definition of
$Q_0$, and using that $Q \ge 2Q_0$. This concludes the proof that $(1) \leq (1/2+1/32+1/400)\eps Q$ with probability $1-e^{-\eps N/2560}$.

Finally, it remains to estimate term $(2)$:
$$
(2)=\sup_{v \in S^{d-1}} \E \ol{U}_Q(v) = \sup_{v \in S^{d-1}}\E \phi_{-Q_0, 5Q_0} \left(\inr{\ol{X},v}\right)~.
$$
Clearly $X_v=\inr{\ol{X},v}$ is centered and
$\phi_{-Q_0, 5Q_0} (X_v) \not = X_v$ only when either $X_v \geq 5Q_0$ or $X_v \leq -Q_0$. Hence,
\begin{eqnarray*}
 \E \phi_{-Q_0, 5Q_0} (X_v)
& = &
 \E \left(\phi_{-Q_0, 5Q_0} (X_v) - X_v\right) \\
& \le & \EXP |Q_0+X_v| \IND_{X_v \le -Q_0}
\\
& \leq & \frac{\eps Q}{64}
\end{eqnarray*}
by an argument analogous to \eqref{eq:ebound} and using \eqref{eq:chebyshev}.
\end{proof}

With Proposition \ref{thm:main-vector} proved, let us complete
the proof of Theorem \ref{thm:main}.
Let $i_0$ be such that $Q \defeq 2^{i_0} \in [2Q_0,4Q_0)$ and let $E$ be the ``good'' event
that both \eqref{eq:uniform-beta} and
$$
\sup_{v \in S^{d-1}} \left| U_{Q}(v) - \inr{\mu,v} \right| \leq 2 \eps Q
$$
hold. Recall that
$$
U_Q(v)=\frac{1}{N} \sum_{i=1}^N \phi_{\alpha_v-Q,\beta_v+Q}\left(\inr{\wt{X}_i,v}\right)~;
$$
$E$ holds with probability at least $1-\delta$; and on $E$,
any point in $\Gamma(2^{i_0})$ is within distance $4\eps Q_0$ of the mean
$\mu$.
Hence, it suffices to show that on the event $E$, the sets $\Gamma(2^i)$ for $i \geq
i_0$ are nested. Indeed, by the definition of $i^*$,
$$
\emptyset \neq \bigcap_{i \geq i^*} \Gamma(2^i) \subset \Gamma(2^{i_0}),
$$
and thus $\|\wh{\mu}-\mu\| \leq 4\eps Q_0$.

To see that $\Gamma(2^{i_0})\subset \Gamma(2^{i_0+1})$  it is enough
to show that, for all $v \in S^{d-1}$,  $|\inr{x,v}-U_{2Q}(v)| \leq 4\eps Q$.
But if $x \in \Gamma(v,Q)$ for some $v\in S^{d-1}$, it follows that
$$
|\inr{x,v}-U_{2Q}(v)|  \leq  |\inr{x,v}-U_Q(v)|+|U_Q(v)-U_{2Q}(v)| \leq 2\eps Q + |U_Q(v)-U_{2Q}(v)|;
$$
therefore, it suffices to show that $|U_Q(v)-U_{2Q}(v)| \leq 2 \eps Q$.

Note that on the event $E$, there are at most $\eps N/4$ sample points
$\wt{X}_i$ such that $\inr{\wt{X}_i,v}$ is above or below the levels $\alpha_v-2^{i_0}$ and
$\beta_v+2^{i_0}$.
Hence, the number of points for which $U_Q(v) \not = U_{2Q}(v)$ is at
most $\eps N/4$ and so the difference is at most $(2Q \eps N/4)/N
= \eps Q/2$.

By induction, the same argument shows that, on the event $E$,
$\Gamma(2^i) \subset \Gamma(2^{i+1})$ for every $i \geq i_0$,
completing the proof of Theorem \ref{thm:main}.

\subsection*{Acknowledgements}
We thank the referees and the associate editor for  insightful
comments and pointing out relevant connections to
previous work.

\end{document}